\newtheorem{Def}{D\'efinition}
\newtheorem{Thm}[Def]{Theorem}
\newtheorem{Prop}[Def]{Proposition}
\newtheorem{Lem}[Def]{Lemma}
\newtheorem{Cons}[Def]{Consequences}
\newtheorem{Exam}[Def]{Example}
\title[Malnormal subgroups]
{Malnormal subgroups and Frobenius groups:
basics and examples}
\author[Malnormal subgroups]
{Pierre de la Harpe and Claude Weber
\\
Appendix by Denis Osin}
\thanks{P.H. and C.W. are grateful to D.O. 
for several comments on the last-but-one version of this paper,
and for providing the appendix at the end of the paper.}
\address{Pierre de la Harpe and Claude Weber~:
Section de math\'ematiques, 
Universit\'e de Gen\`eve, 
C.P.~64, 
CH--1211 Gen\`eve 4. 
}
\email{Pierre.delaHarpe@unige.ch 
\hskip.1cm and \hskip.1cm
Claude.Weber@unige.ch}
\address{Denis Osin~:
Stevenson Center 1326, Department of Mathematics, Vanderbilt University Nashville,
TN 37240, U.S.A.
}
\email{denis.osin@gmail.com}
\keywords{Malnormal subgroup, infinite permutation group, Frobenius group, 
knot group, peripheral subgroup.}
\subjclass[2000]{Primary (20B07), secondary (20B05)}
\date{April 10, 2011}
\begin{document}

\begin{abstract}
Malnormal subgroups occur in various contexts.
We review a large number of examples,
and we compare the situation in this generality to that 
of finite Frobenius groups of permutations.
\par
In a companion paper \cite{HaWe}, we analyse when
peripheral subgroups of knot groups and $3$-manifold groups are malnormal.
\end{abstract}

\maketitle

\section{\textbf{Introduction}
}
\label{section1}

A subgroup $H$ of a group $G$ is \emph{malnormal}
if $gHg^{-1} \cap H = \{e\}$ for all $g \in G$ with $g \notin H$.
As much as we know, 
the term goes back to a paper by Benjamin Baumslag
containing conditions for an amalgam $H \ast_L K$ 
(called a ``generalized free product'' in \cite{Baum--68})
to be $2$-free 
(= such that any subgroup generated by two elements is free).
Other authors write that $H$ is \emph{conjugately separated}
instead of ``malnormal'' \cite{MyRe--96}.
\par

The following question arose in discussions with Rinat Kashaev
(see also \cite{Kashaev} and \cite{Kash--11}).
We are grateful to him for this motivation.
\begin{itemize}
\item[]\emph{Given  a knot $K$ in $\mathbf S^3$,
when is the peripheral subgroup  malnormal
in the group $\pi_1(\mathbf S^3 \smallsetminus K)$ of $K$ ?
}
\end{itemize}
The answer, for which we refer to \cite{HaWe},
is that the peripheral subgroup is malnormal unless
$K$ is either a torus knot, or a cable knot, or a connected sum.

\medskip

The main purpose of the present paper is to collect 
in Section \ref{section3} several examples of pairs
\begin{center}
(infinite group, malnormal subgroup)
\end{center}
which are classical.
We recall  in Section \ref{section2} 
basic elementary facts on malnormal subgroups,
and we conclude in Section \ref{section4} by comparing 
the general situation with that of finite Frobenius groups.

\section{\textbf{General facts on malnormal subgroups}
}
\label{section2}

The two following propositions collect some straightforward properties
of malnormal subgroups.

\begin{Prop}
\label{Prop1}
Let $G$ be a group and $H$ a subgroup;
let $X$ denote the homogeneous space $G/H$
and let $x_0 \in X$ denote the class of $H$.
The following properties are equivalent:
\begin{itemize}
\item[(a)]
$H$ is malnormal in $G$;
\item[(b)]
the natural action of $H$ on $X \smallsetminus \{x_0\}$ is free;
\item[(c)]
any $g \in G$, $g \ne e$, has zero or one fixed point on $X$.
\end{itemize}
In case $G$ moreover contains a normal subgroup $N$ such that
$G$ is the semi-direct product $N \rtimes H$, these properties
are also equivalent to each of:
\begin{itemize}
\item[(d)]
$nh \ne hn$ for all $n \in N$, $n \ne e$, and $h \in H$, $h \ne e$;
\item[(e)]
$C_G(h) = C_H(h)$ for any $h \in H$, $h \ne e$. 
\end{itemize}
($C_G(h)$ denotes the centraliser $\{g \in G \hskip.1cm \vert \hskip.1cm gh=hg \}$
of $h$ in $G$.)
\end{Prop}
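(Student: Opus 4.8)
The plan is to make the left-translation action of $G$ on $X = G/H$ the central object and to read each condition off as a statement about fixed points. The preliminary observation I would record is that an element $g'$ fixes the coset $gH$ if and only if $g^{-1}g'g \in H$; hence the stabiliser of $gH$ in $G$ is $gHg^{-1}$, its stabiliser in $H$ is $H \cap gHg^{-1}$, and in particular every element of $H$ fixes $x_0$. With this, the equivalence (a) $\Leftrightarrow$ (b) is immediate: the action of $H$ on $X \smallsetminus \{x_0\}$ is free precisely when $H \cap gHg^{-1} = \{e\}$ for every $g \notin H$, since these are exactly the $H$-stabilisers of the points $gH \ne x_0$, and this is the definition of malnormality.

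For (b) $\Leftrightarrow$ (c) I would use that fixed-point sets transform equivariantly under conjugation, $\mathrm{Fix}(aha^{-1}) = a\cdot\mathrm{Fix}(h)$, so that $g$ and any of its conjugates have the same number of fixed points. A nontrivial $g$ fixes some $aH$ if and only if $a^{-1}ga$ is a nontrivial element of $H$; thus condition (c) reduces to the assertion that each nontrivial $h \in H$ has at most one fixed point on $X$. Since such an $h$ already fixes $x_0$, ``at most one'' means ``exactly $x_0$'', i.e. $h$ fixes no point of $X \smallsetminus \{x_0\}$, which is freeness of the $H$-action there, namely (b). This passage between an element of $G$ and its $H$-conjugate is the only slightly delicate point in the first half.

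Now assume $G = N \rtimes H$. Using that $N$ is a transversal for $G/H$, I would identify $X$ with $N$, the base point $x_0$ with $e$, and check that under this identification the $H$-action becomes conjugation, $h\cdot n = hnh^{-1}$ (which lies in $N$ by normality). Freeness on $X \smallsetminus \{x_0\} \cong N \smallsetminus \{e\}$ then says precisely that $hnh^{-1} \ne n$, equivalently $nh \ne hn$, for all $n \ne e$ and $h \ne e$; this is (d), so (b) $\Leftrightarrow$ (d).

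Finally (d) $\Leftrightarrow$ (e). Since $C_H(h) = C_G(h) \cap H$ always holds, (e) amounts to $C_G(h) \subseteq H$. One direction is instant: if $nh = hn$ with $n \ne e$, then $n \in C_G(h) \smallsetminus H$, violating (e). For the converse I expect the main (still routine) computation: writing $g = nk$ with $n \in N$, $k \in H$ and expanding $ghg^{-1} = n(khk^{-1})n^{-1}$, the uniqueness of the $N \rtimes H$ decomposition forces the $H$-part $khk^{-1}$ to equal $h$, and then the $N$-part, the commutator $nhn^{-1}h^{-1}$, to be trivial; thus $n$ commutes with $h$, and (d) with $h \ne e$ forces $n = e$, so $g = k \in H$. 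Keeping track of which factor of $n(khk^{-1})n^{-1}$ lands in $N$ and which in $H$ is the step most likely to require care.
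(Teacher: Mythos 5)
Your proof is correct and complete: the identification of stabilisers with $gHg^{-1}$ and $H\cap gHg^{-1}$ gives (a)$\Leftrightarrow$(b), the conjugation-equivariance $\mathrm{Fix}(aga^{-1})=a\cdot\mathrm{Fix}(g)$ correctly reduces (c) to elements of $H$, the identification of $X$ with $N$ (valid since $N$ is a transversal, $N\cap H=\{e\}$) turns the $H$-action into conjugation and yields (b)$\Leftrightarrow$(d), and your $N$-part/$H$-part analysis of $ghg^{-1}=n(khk^{-1})n^{-1}$ for (d)$\Rightarrow$(e) is exactly right. The paper offers no proof of its own, declaring it an exercise with a pointer to the proof of Theorem 6.4 in \cite{Isaa--08}; your argument is the standard solution of that exercise (and, unlike the finite-group setting of that reference, visibly uses no finiteness), so it matches the intended approach.
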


The proof is an exercise; if necessary, 
see the proof of Theorem 6.4 in \cite{Isaa--08}.

Following an ``added in proof'' of Peter Neumann in \cite{NeRo--98},
we define a \emph{Frobenius group} to be a group $G$
which has a malnormal subgroup $H$ distinct from $\{e\}$ and $G$.
A \emph{split Frobenius group} is a Frobenius group $G$
containing a malnormal subgroup $H$ and a normal subgroup $N$
such that $G = N \rtimes H$; then, it follows that the restriction to $N$
of the action of $G$ on $G/H$ is \emph{regular}, 
namely transitive with trivial stabilisers (the latter condition means
$\{ n \in N \hskip.1cm \vert \hskip.1cm n gH = gH \} = \{e\}$
for all $gH \in G/H$).

In finite group theory, according to a famous result of Frobenius,
Properties (a) to (c) \emph{imply} the existence 
of a splitting normal subgroup $N$,
so that any finite Frobenius group is split.
More on this in our Section \ref{section4}.

\begin{Prop}
\label{Prop2}
Let $G$ be a group.
\begin{itemize}
\item[(i)]
The trivial subgroups $\{e\}$ and $G$ are malnormal in $G$.
They are also the only subgroups of $G$ which are
both normal and malnormal.
\item[(ii)]
Let $H$ be a malnormal subgroup in $G$; 
then $gHg^{-1}$ is malnormal for any $g \in G$.
More generally $\alpha (H)$ is malnormal 
for any automorphism $\alpha$ of $G$.
\item[(iii)]
Let $H$ be a malnormal subgroup of $G$ and $K$ a malnormal subgroup of $H$;
then $K$ is malnormal in $G$.
\item[(iv)]
Let  $H$ be a malnormal subgroup and $S$ be a subgroup  of $G$;
then $H \cap S$ is malnormal in $S$.
\item[(v)]
Let $\left( H_{\iota} \right)_{\iota \in I}$ be a family of malnormal subgroups of $G$;
then $\bigcap_{\iota \in I} H_{\iota}$ is malnormal in $G$.
\item[(vi)]
Let $H$ and $H'$ be two groups;
then $H$ is malnormal in the free product $H \ast H'$.
\item[(vii)]
Let $H$ be a non--trivial subgroup of $G$;
if the centre $Z$ of $G$ is non--trivial, then $H$ is not malnormal in $G$.
\item[(viii)]
Let $H$ be a non-trivial subgroup of $G$ containing at least $3$ elements;
if $G$ contains a normal subgroup $C$ which is infinite cyclic,
then $H$ is not malnormal in $G$. 
\par
In particular, a group $G$ without $2$-torsion
containing a normal infinite cyclic subgroup
(such as the fundamental group of a Seifert manifold
not covered by $\mathbf S^3$)
does not contain any non-trivial malnormal subgroup.

\end{itemize}
\end{Prop}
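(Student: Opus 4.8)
The plan is to treat the eight items roughly in order of difficulty, since the later ones reuse the earlier ones. Items (i)--(v) are direct verifications from the defining condition $gHg^{-1}\cap H=\{e\}$ for $g\notin H$. For (i), the subgroup $\{e\}$ satisfies the condition trivially and for $G$ it is vacuous; and if $H$ is both normal and malnormal with $H\neq G$, then picking $g\notin H$ gives $gHg^{-1}=H$ by normality, whence $H=H\cap gHg^{-1}=\{e\}$, so $H\in\{\{e\},G\}$. For (ii) I would note that conjugation is a special case of an automorphism, so it suffices to treat $\alpha(H)$; writing an element of $\alpha(H)$ outside it as $\alpha(g)$ with $g\notin H$ and using $\alpha(g)\alpha(H)\alpha(g)^{-1}\cap\alpha(H)=\alpha\big(gHg^{-1}\cap H\big)$ finishes it.

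Items (iii), (iv), (v) are element chases. For (iv), given $s\in S\setminus(H\cap S)$ (so $s\notin H$), any nontrivial $x\in s(H\cap S)s^{-1}\cap(H\cap S)$ would lie in $sHs^{-1}\cap H$, contradicting malnormality of $H$; (v) is the same argument applied to an index $\iota_0$ with $g\notin H_{\iota_0}$. For (iii) one shows that a nontrivial $x\in gKg^{-1}\cap K$ with $g\notin K$ forces first $g\in H$ (malnormality of $H$ in $G$, since $x\in gHg^{-1}\cap H$) and then $g\in K$ (malnormality of $K$ in $H$), a contradiction.

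Item (vi) requires the normal form in a free product. I would take $g\notin H$ in reduced form $s_1\cdots s_n$ and $h\in H\setminus\{e\}$, and show $ghg^{-1}\notin H$ by a length computation: if the last syllable $s_n$ lies in $H'$, the word $s_1\cdots s_n\,h\,s_n^{-1}\cdots s_1^{-1}$ is already reduced of length $2n+1\ge3$, hence not in $H$; if $s_n\in H$ (which forces $n\ge2$), one absorbs it, replacing $h$ by the nontrivial $s_nhs_n^{-1}$ and $g$ by $s_1\cdots s_{n-1}$, which now ends in an $H'$-syllable, reducing to the first case. Item (vii) is the central-element trick: choosing $z\in Z\setminus\{e\}$, if $z\notin H$ then conjugation by $z$ fixes $H$ pointwise, so $zHz^{-1}\cap H=H\neq\{e\}$, while if $z\in H$ then for any $g\notin H$ (which exists as $H\neq G$) one has $z\in gHg^{-1}\cap H$.

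The substantial item is (viii), and the main obstacle is the subtlety that $\operatorname{Aut}(C)\cong\{\pm1\}$ is nontrivial, which is exactly what forces the hypotheses $|H|\ge3$ / no $2$-torsion (the infinite dihedral group, with its malnormal order-$2$ subgroup, is the boundary case). Arguing by contradiction, I would assume $H$ malnormal and first apply (iv) with $S=C$: since $C\cong\mathbf Z$ is abelian, (i) forces $H\cap C=\{e\}$ or $H\cap C=C$. If $H\cap C=C$ then $C\le H$, and for $g\notin H$ normality gives $C=gCg^{-1}\le gHg^{-1}\cap H$, a contradiction. If $H\cap C=\{e\}$, normality of $C$ yields a homomorphism $\epsilon\colon G\to\operatorname{Aut}(C)=\{\pm1\}$ with $gcg^{-1}=c^{\epsilon(g)}$; its restriction to $H$ has kernel of index at most $2$, so $|H|\ge3$ guarantees a nontrivial $h\in H$ commuting with a generator $c$. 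Then $c\notin H$ while $chc^{-1}=h$ gives a nontrivial element of $cHc^{-1}\cap H$, the final contradiction. The ``in particular'' clause then follows since a nontrivial proper $H$ with fewer than $3$ elements has order $2$, i.e.\ contains $2$-torsion.
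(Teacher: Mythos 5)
Your proof is correct and takes essentially the same route as the paper: (i)--(v) and (vii) are the same direct verifications the paper leaves to the reader, (vi) is exactly the normal-form argument the paper cites from Magnus--Karrass--Solitar, and in (viii) your homomorphism $\epsilon\colon G\to\operatorname{Aut}(C)\cong\{\pm 1\}$ with kernel of index at most $2$ in $H$ is just a repackaging of the paper's observation that $h^{-1}ch=c^{\pm1}$ and that two $c$-inverting elements $h_1\ne h_2$ produce a $c$-centralizing $k=h_2h_1^{-1}\ne e$. The only cosmetic difference is that you dispose of the case $H\cap C\ne\{e\}$ (which the paper calls obvious) via the neat dichotomy $H\cap C\in\{\{e\},C\}$ obtained from items (iv) and (i).
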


\noindent 
\emph{Proof.}~ Claims (i) to (v) follow from the definition.
Claim (vi) follows from the usual normal form in free products,
and appears formally as Corollary 4.1.5 
of \cite{MaKS--66}.
\par
Claim (vi) carries over to amalgams $H \ast_K H'$
for $K$ malnormal in both $H$ and $H'$
.
\par

For (vii), we distinguish two cases.
First case: $Z \nsubseteq H$;
for $z \in Z$ with $z \notin H$, we have 
$z H z^{-1} \cap H = H \ne \{e\}$, so that~$H$ is not malnormal.
Second case: $Z \subset H$;
for $g \in G$ with $g \notin H$, we have
$\{e\} \ne Z \subset H \cap gHg^{-1}$.
\par

Claim (viii) is obvious if $H \cap C \ne \{e\}$, 
so that we can assume that $H \cap C = \{e\}$.
Choose $c \in C$, $c \ne e$.
For any $h \in H$, say with $h \ne e$, 
observe that $h^{-1}ch = c^{\pm 1}$.

If $h^{-1}ch = c$, then $e \ne h = c^{-1}hc \in H \cap c^{-1}Hc$,
and $H$ is not malnormal.
Since $H$ is not of order two, there exists $h_1, h_2 \in H \smallsetminus \{e\}$
with $k \Doteq h_2 h_1^{-1} \ne e$;
if $h_j^{-1} c h_j = c$ for at least one of $j = 1,2$, 
the previous argument applies;
otherwise $k^{-1}ck = c$,
so that $H$ is not malnormal for the same reason.
\hfill $\square$

 \medskip

About (viii), note that the infinite dihedral group $D_{\infty}$
contains an infinite cyclic subgroup of index $2$,
and that any subgroup of order $2$ in $D_{\infty}$ is malnormal.



\begin{Cons}
\label{Conseq}
Let $G$ be a group.
\begin{itemize}
\item[(ix)]
It follows from (v) that any subgroup $H$ of $G$
has a \emph{malnormal hull}\footnote{Or
\emph{malnormal closure}, as in Definition 13.5 of \cite{KaMy--02}}, 
which is the smallest malnormal subgroup of $G$ containing $H$.
\item[(x)]
It follows from (vi) that any group $H$ is isomorphic to
a malnormal subgroup of some group $G$.
\item[(xi)]
Let $\pi : G \longrightarrow Q$ be a projection onto a quotient group
and let $H$ be a malnormal subgroup in $G$;
then $\pi(H)$ need not be malnormal in $Q$.
\end{itemize}
\end{Cons}

For Claim (xi), consider a factor $\mathbf Z$ 
of the free product $G = \mathbf Z \ast \mathbf Z$
of two infinite cyclic groups,
and the projection $\pi$ of $G$ on its abelianization.
Then (xi) follows from (vi) and (vii).
\par

Consequence (ix)  above suggest the following construction,
potentially useful for the work of Rinat Kashaev.
Given a group $\mathcal G$ and a subgroup $\mathcal H$,
let $\mathcal N$ be the largest normal subgroup 
of $\mathcal G$ contained in $\mathcal H$;
set $H = \mathcal H / \mathcal N$, $G = \mathcal G / \mathcal N$.
Then $H$ has a malnormal hull, say $G_0$, in $G$.
There are interesting cases in which $H$ is malnormal in $G_0 = G$.
\par

For example, let $p,q$ be a pair of coprime integers, $p,q \ge 2$, 
and let $a,b \in \mathbf Z$ be such that $ap+bq = 1$.
Then $\mathcal G = \langle s,t \hskip.2cm \vert \hskip.2cm s^p = t^q \rangle$ 
is a torus knot group; a possible choice of meridian and parallel is
$\mu = s^b t^a$ and $\lambda = s^p \mu^{-pq}$, 
which generate the peripheral subgroup $\mathcal P$ of $\mathcal G$;
see e.g. Proposition 3.28 of \cite{BuZi--85}.
Let $\mathcal N = \langle s^p \rangle$ denote the centre of $\mathcal G$;
if $u$ and $v$ denote respectively 
the images of $s^b$ and $t^a$ in $G := \mathcal G / \mathcal N$, 
then $G  = \langle u, v \hskip.2cm \vert \hskip.2cm 
u^p = v^q = e \rangle \approx C_p \ast C_q$ is
the free product of two cyclic groups of orders $p$ and $q$,
and $P := \mathcal P / \mathcal N = \langle uv \rangle \approx \mathbf Z$
is the infinite cyclic group generated by $uv$.
As we check below (Example \ref{Gromhyp}.C), $P$ is malnormal in $G$.
\par

Let $G$ be a group and $H,H',K$ be subgroups such that
$K \subset H' \subset H \subset G$.
If $K$ is malnormal in $H$, then $K$ is malnormal in $H'$, by (iv).
It follows that it makes sense to speak about
maximal subgroups of $G$ in which $K$ is malnormal.
We will see in Example \ref{nonunique} that $K$ may be malnormal in \emph{several}
such maximal subgroups of $G$, none contained in any other;
in other words, one cannot define \emph{one} largest
subgroup of $G$ in which $K$ would be malnormal.

\section{\textbf{Examples of malnormal subgroups of infinite groups}
}
\label{section3}

Proposition \ref{Prop2} provides a sample of examples. 
Here are a few others.

\begin{Exam}[\textbf{Translation subgroup of the affine group of the line}]
\label{Exaff}
Let $\mathbf k$ be a field and let
$
G =
\left(\begin{matrix}
\mathbf k^{*} & \mathbf k
\\
0 & 1
\end{matrix}\right)
$
be its affine group,
where the subgroup $\mathbf k^{*}$ of $G$ is identified with
the isotropy subgroup of the origin
for the usual action of $G$ on the affine line $\mathbf k$.
Then $\mathbf k^{*}$ is malnormal in $G$.
\end{Exam}

Observe that  the field $\mathbf k$ need not be commutative
(in other words, $\mathbf k$ can be a division algebra).
More generally, if $V$ is a $\mathbf k$--module, 
then the subgroup
$
\mathbf k^{*} =
\left(\begin{matrix}
\mathbf k^{*} & 0
\\
0 & 1
\end{matrix}\right)
$
of the group
$
\mathbf k^{*} \ltimes V = 
\left(\begin{matrix}
\mathbf k^{*} & V
\\
0 & 1
\end{matrix}\right) 
$
is malnormal.
To check Example \ref{Exaff},
it seems appropriate to use (c) in Proposition~\ref{Prop1},
with $\mathbf k^{*} \ltimes V$ acting on $\mathbf k$ by
$((a,b), x) \longmapsto ax+b$.

\begin{Exam}[\textbf{Parabolic subgroup of a Fuchsian group}]
\label{ExFuch}
Consider a discrete subgroup
$\Gamma \subset PSL_2(\mathbf R)$ 
which is not elementary and which contains a parabolic element $\gamma_0$;
denote by $\xi$ the fixed point of $\gamma_0$ in the circle $\mathbf S^{1}$. 
Then the parabolic subgroup
\begin{equation*}
P \, = \, 
\{ \gamma \in \Gamma\hskip.1cm \vert \hskip.1cm \gamma(\xi) = \xi \}
\end{equation*}
corresponding to $\xi$ is malnormal in $\Gamma$.
\end{Exam}

For Example \ref{ExFuch}, 
we recall the  following standard facts.
The group $PSL_2(\mathbf R)$ is identified with the connected component
of the isometry group of the Poincar\'e half plane 
$\mathcal H^{2} = \{z \in \mathbf C \hskip.1cm 
\vert \hskip.1cm \operatorname{Im}(z) > 0 \}$;
it acts naturally on the boundary $\partial \mathcal H^{2} = \mathbf R \cup \{\infty\}$
identified with the circle $\mathbf S^{1}$.
Any $\gamma \in \Gamma$, $\gamma \ne e$,  
is either hyperbolic, with exactly two fixed points on 
$\mathbf S^{1}$,
or parabolic, with exactly one fixed point on $\mathbf S^{1}$,
or elliptic, without any fixed point on $\mathbf S^{1}$.
For such a group $\Gamma$ containing at least one parabolic element
fixing a point $\xi \in \mathbf S^2$, ``non-elementary" means $P \ne \Gamma$.
Since $\Gamma$ is discrete in $PSL_2(\mathbf R)$,
a point in the circle cannot be fixed by 
both a parabolic element and a hyperbolic element in $\Gamma$.
\par

It follows that the action of $P$ 
on the complement of $\{\xi\}$
in the orbit $\Gamma \xi$ satisfies Condition (b) 
of Proposition \ref{Prop1}, so that $P$ is malnormal in~$\Gamma$.

\par

In particular, in $PSL_2(\mathbf Z)$, 
the infinite cyclic subgroup $P$ generated by the class
$
\gamma_0 =
\left[\begin{matrix}
1 &1
\\
0 & 1
\end{matrix}\right]
$
of the matrix
$
\left(\begin{matrix}
1 &1
\\
0 & 1
\end{matrix}\right)
\in SL_2(\mathbf Z)
$
is malnormal (case of $\xi = \infty$).          
In anticipation of Section \ref{section4},
let us point out here that there \emph{cannot}
exist a normal subgroup $N$ of $PSL_2(\mathbf Z)$
such that $PSL_2(\mathbf Z) = N \rtimes P$,
because this would imply the existence of a surjection
$PSL_2(\mathbf Z) \longrightarrow P \approx \mathbf Z$,
but this is impossible since the abelianised group of $PSL_2(\mathbf Z)$
is finite (cyclic of order $6$).

\begin{Exam}[\textbf{Parabolic subgroup of a torsion--free Kleinian group}]
\label{ExKlei}
Consider a  discrete subgroup
$\Gamma \subset PSL_2(\mathbf C)$ 
which is not elementary, 
which is torsion--free, 
and which contains a parabolic element $\gamma_0$;
denote by $\xi$ the fixed point of $\gamma_0$ in the sphere $\mathbf S^{2}$. 
Then the parabolic subgroup
\begin{equation*}
P \, = \, 
\{ \gamma \in \Gamma \hskip.1cm \vert \hskip.1cm \gamma(\xi) = \xi \}
\end{equation*}
corresponding to $\xi$ is malnormal in $\Gamma$.
\end{Exam}

The argument indicated for Example \ref{ExFuch} 
carries over to the case of Example~\ref{ExKlei}.
The group $PSL_2(\mathbf C)$ is identified with the connected component
of the isometry group of the hyperbolic $3$--space 
$\mathcal H^{3} = \{(z,t) \in \mathbf C  \times \mathbf R 
\hskip.1cm \vert \hskip.1cm  t > 0 \}$;
it acts naturally on the boundary 
$\partial \mathcal H^{3} = \mathbf C \cup \{\infty\}$
identified with the sphere $\mathbf S^{2}$.

\par
The number of conjugacy classes of subgroups of the type $P$
is equal to the number of orbits of $\Gamma$
on the subset of $\mathbf S^{2}$ consisting of points
fixed by some parabolic element of $\Gamma$.
\par
In Example~\ref{ExKlei}, 
the hypothesis that $\Gamma$ is torsion--free cannot be deleted.
For example, consider the Picard group $PSL_2(\mathbf Z[i])$, 
its subgroup $P$ of classes of matrices of the form
$
\left[ \begin{matrix}
a &b
\\
0 & a^{-1}
\end{matrix} \right] 
$,
and the subgroup $Q$ of $P$ of classes of matrices of the form
$
\left[ \begin{matrix}
1 & b
\\
0 & 1
\end{matrix}\right] .
$
Set
\begin{equation*}
g \, = \, 
\left[ \begin{matrix}
\phantom{-}0 &1
\\
-1 & 0
\end{matrix} \right] \, \in \, PSL_2(\mathbf{Z}[i])
\hskip.5cm (\text{observe that} \hskip.2cm
g^2 = e, \hskip.1cm g \notin P),
\end{equation*}
and
\begin{equation*}
h \, = \, 
\left[ \begin{matrix}
i & \phantom{-}0
\\
0 & -i
\end{matrix}\right] \, \in \, P
\hskip.5cm (\text{observe that} \hskip.2cm
h^2 = e, \hskip.1cm h \notin Q).
\end{equation*}
As $ghg^{-1} = h^{-1} \in P$, the subgroup $P$ 
is not malnormal in $PSL_2(\mathbf Z[i])$).
As 
$h 
\left[ \begin{matrix}
1 & b
\\
0 & 1
\end{matrix}\right]
h^{-1} \in Q
$
for all $b$, the subgroup $Q$ is not malnormal in $P$
(and \emph{a fortiori} not malnormal in $PSL_2(\mathbf Z[i])$).
\par

Note that, in the group $\Gamma$ of
Example \ref{ExFuch}, torsion is allowed,
because each element $\gamma \ne e$ of finite order
in $PSL_2(\mathbf R)$ acts without fixed point on $\mathbf S^1$.
But the element $h$ above, of order $2$, 
has fixed points on $\mathbf S^2$.

\medskip

\noindent 
\emph{Generalisation.}
In case the group $\Gamma$ of Example \ref{ExKlei}
is the group of a hyperbolic knot, 
$P$ is the \emph{peripheral subgroup} of $\Gamma$.
This carries over to a much larger setting, see Lemma \ref{malnL} below.
This itself can be extended to ``peripheral subgroups''
in much more general situations, 
the conclusion being then that these subgroups are \emph{almost} malnormal
\cite[Theorem 1.4]{Osi--06b}.
\par
A subgroup $H$ of a group $G$ is \emph{almost malnormal}
if $gHg^{-1} \cap H$ is finite for any $g \in G$ with $g \notin H$,
equivalently if the following condition holds:
for any pair of distinct points $x,y \in G / H$, 
the subgroup $\{g \in G \hskip.1cm \vert \hskip.1cm gx=x, gy=y \}$
is finite.

\medskip

Example \ref{Gromhyp} is a variation on Examples \ref{ExFuch} and  \ref{ExKlei},
related to boundary fixed points of hyperbolic elements
rather than of parabolic elements.

\begin{Exam}
[\textbf{Virtually cyclic subgroups of torsion free Gromov hyperbolic groups}]
\label{Gromhyp}
Con\-sider a  Gromov hyperbolic group
$\Gamma$ 
which is  not elementary, 
an element $\gamma_0 \in \Gamma$ of infinite order,
and one of the two points in the boundary $\partial \Gamma$
fixed by $\gamma_0$, say  $\xi$.
Set
\begin{equation*}
P \, = \, 
\{ \gamma \in \Gamma \hskip.1cm \vert \hskip.1cm \gamma(\xi) = \xi \} .
\end{equation*}
Assume moreover that 
\begin{equation}
\text{any $\gamma \in \Gamma \smallsetminus \{e\}$ of finite order
acts without fixed point on $\partial \Gamma$}
\end{equation}
(this is trivially the case
if $\Gamma$ is torsion free).
\par
  Then $P$ is malnormal in $\Gamma$.
\end{Exam}

For the background of Example \ref{Gromhyp},
see  \cite{GhHa--90}, 
in particular Theorem 30 of Chapter 8.
Recall that any element in $P$ fixes also 
the other fixed point of $\gamma_0$,
and that the infinite cyclic subgroup of $\Gamma$
generated by $\gamma_0$ is of finite index in $P$.

\medskip

7.A.
Here is an illustration of Example \ref{Gromhyp}: 
in the free group $F_2$ on two generators $a$ and $b$,
any primitive element, for example $a^kba^{\ell}b^{-1}$
with $k,l \in \mathbf Z \smallsetminus \{0\}$,
generates an infinite cyclic subgroup which is malnormal.
(An element $\gamma$ in  a group $\Gamma$ is \emph{primitive}
if there does not exist any pair $(\delta,n)$,
with $\delta \in \Gamma$ and $n \in \mathbf Z$, $\vert n \vert \ge 2$,
such that $\gamma = \delta^{n}$.)

\medskip

7.B.
In torsion free non-elementary hyperbolic groups,
subgroups of the form $P$ are precisely the
maximal abelian subgroups, which are malnormal. 
(See also Example \ref{CSA}.)

\medskip

7.C.
Here is an old-fashioned variation on Example \ref{Gromhyp}:
consider a discrete subgroup $\Gamma$ of $PSL_2(\mathbf R)$,
and let $h \in \Gamma$ be a hyperbolic element fixing two distinct points
$\alpha, \omega \in \mathbf S^1$;
then 
\begin{equation*}
H \, = \,  \{ \gamma \in \Gamma \hskip.1cm \vert \hskip.1cm 
\gamma(\alpha) = \alpha \}
\end{equation*} 
is malnormal in $\Gamma$.
\par

For an illustration (referred to at the end of Section \ref{section2}), 
consider two integers $p \ge 2$ and $q \ge 3$; 
denote by $C_p$ and $C_q$ the finite cyclic groups of order $p$ and $q$.
It is easy to show\footnote{In the hyperbolic plane,
consider a rotation $u$ of angle $2\pi/p$ 
and a rotation $v$ of angle $2\pi/q$. 
If the hyperbolic distance between the fixed points of $u$ and $v$ is large enough,
the group generated by $u$ and $v$ is a free product $C_p \ast C_q$
(by the theorem of Poincar\'e 
on polygons generating Fuchsian groups \cite{Rham--71}),
and the product $uv$ is hyperbolic.} 
that there exists a non-elementary Fuchsian group $\Gamma$
generated by two isometries $u,v$ such that
$\Gamma = \langle u,v \hskip.2cm \vert \hskip.2cm u^p = v^q = e \rangle 
\approx C_p \ast C_q$,
in which the product $uv$ is hyperbolic and primitive.
It follows that
the infinite cyclic group generated by $uv$ is malnormal in $\Gamma$.

\medskip

7.D.
Let us mention another variation:
let $\mathbb G$ be a connected semisimple real algebraic group 
without compact factors,
let $d$ denote its real rank, and let $\Gamma$ be a torsion free uniform lattice
in $G := \mathbb G(\mathbf R)$.
Then $\mathbb G$ contains a maximal torus $\mathbb T$ such that
$A := \Gamma \cap \mathbb T (\mathbf R) \approx \mathbf Z^d$
is malnormal in $\Gamma$;
see \cite{RoSt--10}, building up on a result of Prasad and Rapinchuk,
and motivated by the construction of an example in operator algebra theory.
For an earlier use of malnormal subgroups in operator algebra theory,
see \cite{Robe--06}, in particular Corollary 4.4, 
covered by our Example \ref{Gromhyp}.

\medskip

7.E
Consider the situation of Example \ref{Gromhyp}
\emph{without} the hypothesis (1).
Then it is still true that $P$ is \emph{almost malnormal}
(as defined in the generalisation of Example \ref{ExKlei}).

\medskip

The following example supports the last claim of Section \ref{section2}.

\begin{Exam}
\label{nonunique}
There exists a group $G$ containing two distinct maximal subgroups $B_+, B_-$
and a subgroup $T \subset B_+ \cap B_-$
which is malnormal in each of $B_+, B_-$, but not in $G$.
\end{Exam}

Set  $G = PGL_2(\mathbf C)$; let
$\pi : GL_2(\mathbf C) \longrightarrow G$ denote the canonical projection.
Define the subgroups
\begin{equation*}
T \, = \, \pi 
\left( \begin{matrix}
\mathbf C^{*} & 0
\\
0 & \mathbf C^{*}
\end{matrix}\right) ,
\hskip.3cm
B_+ \, = \, \pi 
\left( \begin{matrix}
\mathbf C^{*} & \mathbf C
\\
0 & 1
\end{matrix}\right) ,
\hskip.3cm \text{and} \hskip.3cm 
B_- \, = \, \pi 
\left( \begin{matrix}
\mathbf C^{*} & 0
\\
\mathbf C & 1
\end{matrix}\right)
\end{equation*}
of $G$.
Then $T$ is malnormal in $B_+$ and in $B_-$
(see Example~\ref{Exaff} and Proposition  \ref{Prop2}.ii), but not in $G$,
since $T$ is strictly contained in its normalizer $N_G(T)$,
the quotient $N_G(T)/T$ being the Weyl group 
of order $2$.
Moreover, $B_+$ and $B_-$ are maximal subgroups in $G$;
this can be checked in an elementary way,
and is also a consequence of 
general properties of parabolic subgroups
(see \cite{Bour--68}, Chapter~IV, $\S$~2, n$^{o}$ 5, Th\'eor\`eme 3).

\begin{Exam}[\textbf{CSA}]
\label{CSA}
A group is said to be CSA if all its maximal abelian subgroups are malnormal.
\par
The following groups are known to be CSA :
(i) torsion free hyperbolic groups, 
(ii) groups acting freely and without inversions on $\Lambda$-trees (in particular on trees),
and (iii) universally free groups.\end{Exam}

For (i), see Example \ref{Gromhyp}.B. 
For (ii), see \cite{Bass--91}, Corollary 1.9;
(iii) follows.
For CSA groups, see 
\cite{MyRe--96},  \cite{GiKM--95},
and other papers by the same authors.


\medskip

The nature of the two last examples of this section
is more combinatorial than geometric.
 
\begin{Exam}
[\textbf{M.~Hall}]
\label{??}
Let $F$ be a free group and $H$ a finitely generated subgroup of $F$;
then there exist a subgroup of finite index $F_0$ in $F$
which contains $H$ and a subgroup $K$ of $F$
such that $F_0 = H \ast K$.

In particular, $H$ is malnormal in $F_0$.
\end{Exam}

This is a  result due to M.~Hall and often revisited.
See \cite{Hall--49}, \cite{Burn--69}, \cite{Stal--83},
or Lemma 15.22 on Page 181 of \cite{Hemp--76}.
\par
Rank $2$ malnormal subgroups of free groups are characterised in
\cite{FiMR--02}.


\begin{Exam}
[\textbf{An example of B.B.~Newman}]
\label{BBN}
Let $G = \langle X ; r \rangle$ be a one relator group with torsion,
let $Y$ be a subset of $X$ which omits
at least one generator occuring in $r$,
and let $H$ be the subgroup of $G$ generated by $Y$.
Then $H$ is malnormal in $G$.
\end{Exam}

For Example \ref{BBN}, see Chapter IV of
\cite{LySc--77}, just before Theorem  5.4, Page 203.

\medskip

To conclude this section, we quote two more known facts
about malnormal subgroups:
\begin{itemize}
\item[$\bullet$]
there exist hyperbolic groups for which there is no algorithm
to decide which finitely generated subgroups are malnormal
\cite{BrWi--01};
\item[$\bullet$]
if $H$ is a finitely generated subgroup 
of a finitely generated free group $F$,
the malnormal closure $K$ of $H$ in $F$ 
has been investigated in \cite{KaMy--02};
in particular, $K$ is finitely generated
(part of Theorem 13.6 in \cite{KaMy--02}).
\end{itemize}
Other papers on malnormal groups include
\cite{BaMR--99}
and \cite{KaSo--71}.

\medskip

Almost malnormal subgroups have hardly be mentioned here
(but in the generalisation of Example \ref{ExKlei}).
They have nevertheless their importance,
for example for proving residual finiteness of some groups
in various papers by Daniel T. Wise (three of them quoted in our references).
Here is a result of \cite{Wis--02a}:
\emph{the free product of two virtually free groups
amalgamating a finitely generated almost malnormal subgroup
is residually finite}. The malnormality condition is necessary!
indeed: \emph{there exists a free group $F$ and a subgroup $E$ of finite index
such that the amalgamated product $F \ast_E F$ is
an infinite simple group} \cite[Theorem 5.5]{BuMo--00}.

\section{\textbf{Comparison with malnormal subgroups of finite groups}
}
\label{section4}

Let $H$ be a malnormal subgroup in a  group $G$, with $H \ne \{e\}$ and $H \ne G$.
Let $N$ denote the \emph{Frobenius kernel}, which is by definition
the union of $\{e\}$ and of the complement in $G$
of $\bigcup_{g \in G} g H g^{-1}$;
observe that $N \smallsetminus \{e\}$ is the set of elements in $G$
without any fixed point on $G/H$.
The subgroup $H$ is called the \emph{Frobenius complement}.
\par

For the case of a \emph{finite group} $G$, 
let us quote the following three important results,
for which we refer to Theorems V.7.6, V.8.7, and V.8.17 in \cite{Hupp--67};
see also \cite{Asch--00} 
or \cite{DiMo--96}.
\begin{itemize}
\item[$\circ$]
\emph{$N$ is a normal subgroup of $G$,
its size $\vert N \vert$ coincides with the index $[G:H]$,
and $G$ is a semi-direct product $N \rtimes H$ (Frobenius \cite{Frob--01});
moreover $\vert N \vert \equiv 1 \pmod{\vert H \vert}$.}
%
%
\item[$\circ$]
\emph{$N$ is a nilpotent group; moreover, 
if $H$ is of even order, then $N$ is abelian
(Thompson \cite{Thom--59}, \cite{Thom--60});}
\item[$\circ$]
Let $H'$ be another malnormal subgroup in $G$, neither $\{e\}$ nor $G$,
and let $N'$ be the corresponding complement;
then $H'$ is conjugate to $H$ and $N' = N$.
\end{itemize}
(Moreover, $N'=N$ coincides with the ``Fitting subgroup'' of $G$,
namely the largest nilpotent normal subgroup of $G$.)
There are known, but non-trivial, examples 
showing that $H$ need not be solvable, and that $N$ need not be abelian.

\medskip

These facts do not carry over to infinite groups,
as already noted in several places including 
\cite{Coll--90} and Page 90 in \cite{DiMo--96}.
In what follows and as usual, $G$ is a group with malnormal subgroup $H$
and Frobenius complement $N$; we assume that $H$ is neither $\{e\}$ nor $G$.

\subsection{$N$ need not be a subgroup of $G$.}
For example, let $K$ be a non-trivial knot in $\mathbf S^3$,
$G_K$ its group, and $P_K$ its peripheral subgroup.
Assume that $K$ is prime, and neither a torus knot nor a cable knot,
so that $P_K$ is malnormal in $G_K$ \cite{HaWe}.
Since the abelianisation of $G_K$ is $\mathbf Z$
(for example by Poincar\'e duality),
the Frobenius kernel is not a subgroup
(otherwise $P_K \approx \mathbf Z^2$ would be 
a quotient of $G_K^{\text{ab}} \approx \mathbf Z$, which is preposterous).
\par

Another example is provided by $H$, malnormal in $G = H \ast K$,
with $H$ and $K$ non-trivial and not both of order $2$
(see Proposition \ref{Prop2}.vi).
Again, the kernel is not a subgroup; indeed,
for $h_1,h_2 \in H \smallsetminus \{e\}$ with $h_1h_2 \ne e$ and $k \in K \smallsetminus \{e\}$,
then $h_1k$ and $k^{-1}h_2$ are in the complement, but $h_1h_2$ is not.
\par

The example of the cyclic subgroup $H$ generated by $x^{-1}y^{-1}xy$
in the free group $G$ on two generators $x$ and $y$, which is a malnormal subgroup
of which the complement is not a subgroup, appears on Page 51 of \cite{KeWe--73};
see also  Example \ref{Gromhyp}.A above.

\subsection{There are examples with $N = \{e\}$.}
Consider a large enough prime $p$ and a \emph{Tarski monster} for $p$,
namely an infinite group $G$ in which any non-trivial subgroup is cyclic of order $p$.
Such subgroups have been shown to exist by  Ol'shanskii (1982),
see \S~28 in \cite{Ol's--91}, 
and independently by Rips (unpublished, cited in \cite{Coll--90});
note that such a $G$ is necessarily generated by two elements, and is a simple group.
Then $G$ acts by conjugation on the set $X$ of its non-trivial subgroups,
in a transitive way.
This makes $G$ a Frobenius group of permutations, since any $g \in G$, $g \ne e$
has a unique fixed point in $X$ which is $\{e, g, g^2, \hdots, g^{p-1}\}$.
Thus $N$ is reduced to $\{e\}$, and $G$ is certainly not a semi-direct product
of $N$ and a cyclic group of order $p$.
This example has been noted in several places, one being \cite{Came--86}.

\subsection{When $N$ is a subgroup of $G$, it need not be nilpotent}
This is shown by the example of the wreath product $G = S \wr \mathbf Z$,
with $S$ a simple group.
The subgroup $H = \mathbf Z$ is malnormal, and the corresponding Frobenius kernel
$N = \bigoplus_{i \in \mathbf Z} S_i$, with each $S_i$ a copy of $S$, 
is not nilpotent.
More generally, given a group $H$ acting on a set $X$
in such a way that $h^{\mathbf Z}x$ is infinite for all $h \in H$, $h \ne e$, and $x \in X$,
as well as a group $S \ne \{e\}$, 
the permutational wreath product $G = S \wr_X H$ contains $H$ as a malnormal subgroup,
with Frobenius kernel $\bigoplus_{x \in X} S_x$.

\subsection{Malnormal subgroups need not be conjugate.}
This is clear with a free product $G = H \ast K$ as in Proposition \ref{Prop2},
where $H$ and $K$ are both malnormal subgroups, and are clearly non-conjugate.
If $G = H \ast K \ast L$, with non-trivial factors, 
the malnormal subgroup $H$ is strictly contained in the malnormal subgroup $H \ast K$.

\subsection{A last question, out of curiosity}
\label{curiosity}
Let $G = N \rtimes H$ be a semi-direct product.
If $X = G/H$ (as in Proposition \ref{Prop1}) is identified with $N$,
note that the natural action of $G$ can be written like this:
$g = mh \in G$ acts on $n \in N$ to produce $mhnh^{-1} \in N$.
Consider the two following conditions, the first being as in Proposition \ref{Prop1}:
\begin{itemize}
\item[(a)]
$H$ is malnormal in $G$;
\item[(f)]
$C_G(n) = C_N(n)$ for any $n \in N$, $n \ne e$. 
\end{itemize}
Then (f) implies (a). Indeed, for any $n \in N$, Condition (f) implies
$C_H(n) = H \cap C_G(n) \subset H \cap N = \{e\}$;
in other terms, for $h \in H$, $h \ne e$, 
the equality $hnh^{-1} = n$ implies $h = e$.
Thus Condition (b) of Proposition \ref{Prop1} holds, and thus (a) holds also.

When $G$ is finite, then, conversely,  (a) implies (f);
see Theorem 6.4 in \cite{Isaa--08}. 
Does this carry over to the general case?

\medskip

As Denis Osin has answered this question negatively, we reproduce his argument below.

\section{\textbf{Appendix by Denis Osin
\\ 
Answer to the question of \ref{curiosity}}}

Observe that for any split extension $G=N\rtimes H$ 
and for any element $z\in N$, 
$C_N(z)$ is normal in $C_G(z)$ and $C_G(z)/C_N(z)$ 
is isomorphic to a subgroup of $H$. 
It turns out that \emph{every} subgroup of $H$ can be realized as $C_G(z)/C_N(z)$ 
for some $z\in N$ and $G=N\rtimes H$ with $H$ malnormal. 
Below we prove this for torsion free finitely generated groups. 
The proof of the general case is a bit longer. 
It is based on the same idea but uses some additional technical results 
about van Kampen diagrams and small cancellation quotients of relatively hyperbolic groups.

\begin{Thm}
\label{mainT}
For any finitely generated torsion free group $H$ 
and any finitely generated subgroup $Q\le H$,
there exists a group $N$, a split extension $G=N\rtimes H$, 
and a nontrivial element $z\in N$ 
such that $H$ is malnormal in $G$ and $C_G(z)/C_N(z)\cong Q$.
\end{Thm}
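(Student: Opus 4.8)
The plan is to realise the required group as a tightly controlled quotient of the free product $G_0 = H \ast \mathbf{Z}$, with $\mathbf{Z} = \langle t\rangle$. Let $\pi_0 : G_0 \to H$ be the retraction killing $t$ and let $N_0 = \ker \pi_0$ be its Frobenius kernel. By the Kurosh subgroup theorem $N_0$ is free, and its basis is the conjugation orbit of $t$: writing $x_h = hth^{-1}$, one has that $N_0$ is free on $\{x_h \mid h \in H\}$, with $H$ permuting this basis regularly via $h' \cdot x_h = x_{h'h}$. By Proposition \ref{Prop2}(vi), $H$ is malnormal in $G_0 = N_0 \rtimes H$. Taking $z = t = x_e$ one checks $C_{G_0}(t) = \langle t\rangle = C_{N_0}(t)$, so here the quotient $C_{G_0}(t)/C_{N_0}(t)$ is trivial; the whole point is to deform $G_0$ so that this quotient becomes $Q$ while malnormality of $H$ survives.

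First I would recast the target as a conjugacy pattern inside $N$. In any split extension $G = N \rtimes H$ with $z = t \in N$, an element $g = mh$ (with $m \in N$, $h \in H$) centralises $t$ precisely when $m\,x_h\,m^{-1} = x_e$ in $N$; hence the projection $C_G(t) \to H$ has image $S := \{h \in H \mid x_h \text{ is } N\text{-conjugate to } x_e\}$ and kernel $C_N(t)$, so that $C_G(t)/C_N(t) \cong S$. Using $x_{hh'} = h\cdot x_{h'}$ together with the $H$-equivariance of conjugation, one sees immediately that $S$ is a \emph{subgroup} of $H$: if $x_h$ and $x_{h'}$ are both conjugate to $x_e$, then applying the automorphism $h\cdot(-)$ gives $x_{hh'} \sim x_h \sim x_e$, and inverses are handled the same way. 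Consequently it suffices to build $G = N \rtimes H$, still with $H$ malnormal, in which the single conjugacy $x_{q_i} \sim_N x_e$ is forced for each of finitely many generators $q_1,\dots,q_k$ of $Q$ (which yields $S \supseteq Q$), while \emph{no unintended} conjugacies $x_h \sim_N x_e$ with $h \notin Q$ arise (which yields $S \subseteq Q$, hence $S = Q$). Since any $Q \ne \{e\}$ then gives $C_G(z) \ne C_N(z)$ with $H$ still malnormal, this also answers the question of \ref{curiosity} negatively.

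To force the conjugacies without losing control I would work inside the relatively hyperbolic pair $(G_0, \{H\})$: the free product $H \ast \mathbf{Z}$ is hyperbolic relative to $H$, the complementary factor $\mathbf{Z}$ being hyperbolic. I would enlarge $G_0$ to $G_1 = G_0 \ast F$ with $F = \langle s_1,\dots,s_k\rangle$ free (this preserves both relative hyperbolicity relative to $H$ and malnormality of $H$), extend $\pi$ by $s_i \mapsto e$ so that the $s_i$ lie in $N$, and then pass to the quotient obtained by imposing the relations $s_i\,x_{q_i}\,s_i^{-1} = x_e$ for $i = 1,\dots,k$; since $N$ is to be normal, the full $H$-orbit of these relations is forced automatically. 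Choosing the conjugators generic, say replacing $s_i$ by a long, small-piece word $u_i s_i u_i^{-1}$ with $u_i \in N_0$, so that the resulting symmetrised relator set satisfies a small cancellation condition over the relatively hyperbolic group, I would invoke Osin's small cancellation / Dehn filling machinery to conclude that the quotient $G$ is again hyperbolic relative to $H$, that $H$ embeds and remains malnormal (indeed hyperbolically embedded, so by Proposition \ref{Prop1}(d) the action of $H$ on $N \smallsetminus \{e\}$ stays free), and that the only relations holding in $G$ are consequences of those imposed. Reading the last clause through a van Kampen diagram analysis gives exactly $x_h \sim_N x_e \iff h \in Q$, and with $z = t$ this yields $C_G(z)/C_N(z) \cong Q$.

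The main obstacle is precisely the control of the \emph{unintended} conjugacies together with the preservation of malnormality, that is, establishing $S \subseteq Q$ and the freeness of the $H$-action on $N \smallsetminus \{e\}$. Producing $S \supseteq Q$ is essentially free of charge, but ruling out that the infinitely many imposed relators conspire to create some $x_h \sim_N x_e$ with $h \notin Q$, or to leave a nontrivial $n \in N$ fixed by some $h \ne e$, is exactly what the small cancellation hypotheses and the diagrammatic Greendlinger-type arguments over the relatively hyperbolic group are designed to prevent. Selecting the conjugators generically so that these hypotheses genuinely hold, and carrying out the diagram bookkeeping that verifies them, is the delicate technical heart of the argument; this is also the step whose extension beyond the torsion-free finitely generated case requires the heavier machinery alluded to for the general statement.
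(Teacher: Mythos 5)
Your reduction is correct as far as it goes: in any split extension $G=N\rtimes H$ with $z=t\in N$, the quotient $C_G(t)/C_N(t)$ is indeed isomorphic to $S=\{h\in H \mid x_h \text{ is } N\text{-conjugate to } x_e\}$, and forcing the finitely many conjugacies $x_{q_i}\sim_N x_e$ gives $S\supseteq Q$. But the decisive half of the argument --- that no unintended conjugacies $x_h\sim_N x_e$ with $h\notin Q$ arise, and that $H$ stays malnormal after your quotient --- is exactly what you leave unproven, and it does \emph{not} follow from the black-box you invoke. Theorem \ref{suit} asserts only (a) injectivity of $\varepsilon$ on the peripheral subgroups, (b) relative hyperbolicity of the quotient, and (c) preservation of torsion-freeness; it gives no control whatsoever over which conjugacies hold between loxodromic elements in the quotient. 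Worse, Theorem \ref{suit} does not let you \emph{prescribe} the elements $s_t$ (the theorem chooses them generically), so your relations $s_i x_{q_i} s_i^{-1}=x_e$ with designated conjugators fall outside its scope, and you would have to redo the small cancellation and van Kampen diagram analysis by hand --- precisely the ``additional technical results about van Kampen diagrams'' which the appendix says are needed only for the general (non-finitely-generated or torsion) case, and which the proof of Theorem \ref{mainT} is designed to avoid. As written, your proposal is a program with its central step ($S\subseteq Q$ plus malnormality) acknowledged but missing.

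The paper sidesteps the conjugacy-classification problem entirely by a different choice of starting group: it takes $G_0=(\langle z\rangle\times Q)\ast\langle a,b\rangle\ast H$, thereby installing the \emph{desired centralizer} $\langle z\rangle\times Q$ as a peripheral subgroup of the relatively hyperbolic structure. After applying Theorem \ref{suit} to $T=\{z\}\cup\{a^y,b^y \mid y\in Y\cup Y^{-1}\}\cup\{x^{-1}\iota(x)\mid x\in X\}$ --- which pushes $z$, the conjugates $a^y,b^y$, and the elements $x^{-1}\iota(x)$ into $N$, the image of the suitable subgroup $\langle a,b\rangle$ --- Lemma \ref{malnL} makes $\langle z\rangle\times Q$ malnormal in the torsion-free quotient $G$, whence $C_G(z)=\langle z\rangle\times Q$ \emph{on the nose}, with no diagram bookkeeping at all; and the relations $x^{-1}\iota(x)\in N$ yield a retraction $\rho\colon G\to H$ with $\rho\vert_Q\equiv\iota$ and $N=\operatorname{Ker}\rho$, so that $C_N(z)=C_G(z)\cap\operatorname{Ker}\rho=\langle z\rangle$ and $C_G(z)/C_N(z)\cong Q$. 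The missing idea in your approach is exactly this: encode the centralizer you want as a parabolic subgroup (where malnormality comes for free from Lemma \ref{malnL}) instead of trying to control $N$-conjugacy among the basis elements $x_h$ of the kernel, which the available machinery does not certify.
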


To prove the theorem we will need some tools 
from small cancellation theory over relatively hyperbolic groups. 
Let $G$ be a group hyperbolic relative 
to a collection of subgroups $\{ H_\lambda \} _{\lambda \in \Lambda }$. 
An element of $G$ is \emph{loxodromic} 
if it is not conjugate to an element of 
$\bigcup\limits_{\lambda \in \Lambda } H_\lambda$ 
and has infinite order. 
A group is \emph{elementary} if it contains a cyclic subgroup of finite index. 
It is proved in \cite{Osi--06a} that for every loxodromic element $g\in G$, 
there is a unique maximal elementary subgroup $E_G(g)\le G$ containing $g$. 
Two loxodromic elements $f,g$ are \emph{commensurable} (in G) if
$f^k$ is conjugate to $g^l$ in $G$ for some non--zero $k,l$. 
A subgroup $S$ of $G$ is called \emph{suitable} 
if it contains two non-commensurable loxodromic elements $g,h$ 
such that $E_G(g)\cap E_G(h)=\{ 1\} $.

The lemma below follows immediately from \cite[Corollary 2.37]{Osi--06b}. 
\begin{Lem}
\label{malnL}
Let $G$ be a  torsion free group hyperbolic relative 
to a collection of subgroups $\{ H_\lambda \} _{\lambda \in \Lambda } $. 
Then every $H_\lambda $ is malnormal.
\end{Lem}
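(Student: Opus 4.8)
The plan is to obtain malnormality as an immediate strengthening of the \emph{almost} malnormality of peripheral subgroups, a property that already holds in any relatively hyperbolic group, by exploiting the absence of torsion to replace a finite intersection by a trivial one. In other words, I would not re-prove anything about the geometry of $G$; I would isolate the one place where the torsion-free hypothesis does its work.

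First I would record the general structural input. For an arbitrary group hyperbolic relative to $\{H_\lambda\}_{\lambda \in \Lambda}$, each peripheral subgroup $H_\lambda$ is almost malnormal; this is exactly the content of Corollary 2.37 of \cite{Osi--06b}, and it is the same phenomenon already mentioned in the generalisation following Example \ref{ExKlei}. Spelled out, it asserts that for every $g \in G$ with $g \notin H_\lambda$ the intersection $g H_\lambda g^{-1} \cap H_\lambda$ is a \emph{finite} subgroup of $G$. I would take this as a black box, since it encodes the whole geometry of the relatively hyperbolic structure.

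Second I would invoke the torsion-free hypothesis. The set $g H_\lambda g^{-1} \cap H_\lambda$ is a subgroup of $G$; being finite and sitting inside a torsion-free group, it can only be $\{e\}$, since any nontrivial element of a finite subgroup would have finite order greater than $1$. Hence $g H_\lambda g^{-1} \cap H_\lambda = \{e\}$ for every $g \notin H_\lambda$, which is precisely the definition of malnormality of $H_\lambda$.

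The hard part will be neither of the two steps above, but the cited input itself: almost malnormality of peripheral subgroups is a genuine theorem resting on the machinery behind relative hyperbolicity (fine hyperbolic graphs and relative Dehn functions, or equivalently the geometrically finite convergence action on the Bowditch boundary). Granting it, the lemma collapses to the trivial observation that a torsion-free group has no nontrivial finite subgroup, so I expect the proof to be a one-line deduction. As an aside, one could instead argue through criterion (c) of Proposition \ref{Prop1}, using the convergence action on the boundary where the $H_\lambda$ arise as maximal parabolic stabilisers, in the spirit of Examples \ref{ExKlei} and \ref{Gromhyp}; but the almost-malnormality route is shorter and makes the exact role of the torsion-free hypothesis transparent.
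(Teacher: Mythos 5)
Your proposal is correct and matches the paper's own argument: the paper likewise states that the lemma ``follows immediately from \cite[Corollary 2.37]{Osi--06b}'' (the almost malnormality of peripheral subgroups), the implicit one-line deduction being exactly yours, namely that the finite subgroup $g H_\lambda g^{-1} \cap H_\lambda$ must be trivial since $G$ is torsion free. Your identification of where the torsion-free hypothesis enters, and your remark that the cited input is the genuinely hard part, are both accurate.
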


The next result follows from \cite[Theorem 2.4]{Osi--10} and its proof.

\begin{Thm}
\label{suit}
Let $G_0$ be a group hyperbolic relative to 
a collection $\{ H_\lambda \} _{\lambda \in \Lambda }$, 
and $S$ a suitable subgroup of $G_0$. 
Then for every finite subset $T\subset G_0$, there exists a set of elements $\{ s_t\mid t\in T\} \subset S$ such that the following conditions hold.
\begin{enumerate}
\item[(a)] 
Let $G=\langle G_0 \mid t=s_t, t \in T\rangle $. 
Then the restriction of the natural homomorphism $\varepsilon \colon G_0\to G$ 
to every $H_\lambda $ is injective.
\item[(b)] 
$G$ is hyperbolic relative to 
$\{ \varepsilon (H_\lambda )\}_{\lambda \in \Lambda}$.
\item[(c)] 
If $G_0$ is torsion free, then so is $G$.
\end{enumerate}
\end{Thm}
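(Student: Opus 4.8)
The plan is to realise $G$ as a small cancellation quotient of $G_0$ in the sense of Osin \cite{Osi--10} and to read conclusions (a)--(c) off from that theory. Write $G = G_0 / \langle\langle\, w_t : t \in T \,\rangle\rangle$, where $w_t = t s_t^{-1}$ are the relators to be adjoined and $\varepsilon \colon G_0 \to G$ is the quotient map. Conclusions (a) and (b) are, after the $s_t$ have been chosen, exactly the content of \cite[Theorem 2.4]{Osi--10}: in a small cancellation quotient of a relatively hyperbolic group the peripheral subgroups embed and the quotient is again hyperbolic relative to their images. So the task splits into two parts: first, choosing the elements $s_t \in S$ so that the hypotheses of that theorem are met (this is where suitability of $S$ enters), and second, supplying the additional torsion-freeness statement (c), which is not recorded in the cited \emph{statement} and must be drawn from its proof.

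First I would produce the $s_t$. Here the suitable subgroup $S$ is precisely what is needed: by definition it contains two non-commensurable loxodromic elements $g,h$ with $E_{G_0}(g) \cap E_{G_0}(h) = \{1\}$. From these one manufactures, for each $t \in T$, an element $s_t \in S$ that is again loxodromic, with the $s_t$ pairwise non-commensurable and with $E_{G_0}(s_t) \cap E_{G_0}(s_{t'}) = \{1\}$ for $t \ne t'$ --- for instance by taking $s_t$ to be an alternating product $g^{n_1} h^{m_1} g^{n_2} \cdots$ whose exponents grow rapidly, both along a single word and across distinct $t$. The independence $E_{G_0}(g) \cap E_{G_0}(h) = \{1\}$ ensures that such words have only short overlaps with their own translates and with one another, measured in the relative metric on $G_0$. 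For exponents chosen large enough the resulting family $\{w_t\}$ satisfies the small cancellation condition required in \cite{Osi--10}, and each $w_t$ can moreover be arranged to be primitive, i.e.\ to generate its maximal elementary subgroup $E_{G_0}(w_t)$, which in the torsion-free case is infinite cyclic. Verifying these quantitative estimates is the technical heart of the argument, and I expect it to be the main obstacle: it rests on the length-and-overlap bookkeeping in the relative Cayley graph, together with the van Kampen diagram arguments over $G_0$ that the preamble to this appendix alludes to.

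Granting that the relators satisfy the small cancellation condition, (a) and (b) are immediate from \cite[Theorem 2.4]{Osi--10}. For (c) I would invoke the torsion theorem of small cancellation theory: when the relators $w_t$ are not proper powers, no new finite-order elements are created, so that every finite subgroup of the quotient $G$ is conjugate to the image of a finite subgroup of $G_0$ (the possibility of relator-induced torsion $\mathbf{Z}/n$ is exactly what the no-proper-power hypothesis excludes). Since each $w_t$ has been chosen primitive, and $G_0$ is torsion-free, it follows that $G$ has no nontrivial finite subgroup, i.e.\ $G$ is torsion-free. This is the clause that uses \emph{the proof} of \cite[Theorem 2.4]{Osi--10} rather than only its statement, so that the decisive difficulty remains the quantitative verification of the small cancellation condition carried out in the second step.
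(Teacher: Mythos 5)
Your proposal coincides in substance with the paper's own treatment: the paper's entire justification of this theorem is the one-line remark that it ``follows from \cite[Theorem 2.4]{Osi--10} and its proof,'' and you correctly locate (a) and (b) in the statement of that theorem and (c) in its proof. Your additional sketch (manufacturing the $s_t$ as alternating words in the independent loxodromics $g,h$ and verifying the small cancellation condition) merely reconstructs the interior of Osin's proof --- in \cite[Theorem 2.4]{Osi--10} the existence of the $s_t$ and the small-cancellation bookkeeping are part of the packaged conclusion, not hypotheses the user must check --- so this does not change the route, only makes it more explicit.
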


\begin{proof}[Proof of Theorem \ref{mainT}]
Let 
\begin{equation*}
G_0=(\langle z\rangle \times Q)\ast \langle a, b\rangle \ast H.
\end{equation*}
Clearly $G_0$ is hyperbolic relative to the collection 
$\{ \langle z \rangle \times Q , H\}$. 
Let $X$ and $Y$ be finite generating sets of $Q$ and $H$, respectively. 
We fix an isomorphic embedding $\iota \colon Q\to H$. 
Without loss of generality we can assume that $\iota (X)\subseteq Y$.

It is easy to see that $S=\langle a,b\rangle $ is a suitable subgroup of $G_0$. 
Indeed $a$ and $b$ are not commensurable in $G_0$ 
and $E_{G_0} (a)\cap E_{G_0}(b)=\{ 1\}$. 
We apply Theorem \ref{suit} to the finite set
\begin{equation*}
T=\{ z\} \cup \{ a^y, b^y \mid y\in Y\cup Y^{-1}\} \cup \{ x^{-1}\iota (x) \mid x\in X\}.
\end{equation*}
Let $G$ be the corresponding quotient group. 
For simplicity we keep the same notation for elements of $G_0$ 
and their images in $G$. 
Part (a) of Theorem \ref{suit} also allows us 
to identify the subgroups $\langle z\rangle \times Q $ and $H$ of $G_0$ 
with their (isomorphic) images in $G$.

Let $N$ be the image of $S$ in $G$. 
Note that, in the quotient group $G$, we have $t\in N$ for every $t\in T$. 
In particular we have $z\in N$ and $x^{-1}\iota (x)\in N$ for all $x\in X$. 
Hence the group $G$ is generated by $\{ a, b\}\cup Y$. 
Since $a^y, b^y \in N$ for all $y\in Y\cup Y^{-1}$, 
the subgroup $N$ is normal in $G$ and $G=NH$. 
Using Tietze transformations it is easy to see 
that the map $a\mapsto 1$ and $b\mapsto 1$ 
extends to a retraction $\rho \colon G\to H$ 
such that $\rho\vert _Q\equiv \iota $. 
In particular, $H\cap N=\{e\}$ and hence $G=N\rtimes H$.

Since $G_0$ is torsion free, so is $G$ by Theorem \ref{suit} (c). 
By Theorem \ref{suit} (b) and Lemma \ref{malnL}, 
the subgroups $H$ and $\langle z\rangle \times Q$ are malnormal in $G$. 
In particular, $C_G(z)  = \langle z\rangle \times Q$. 
Since $z\in N$ and $\rho\vert _Q\equiv \iota $, 
we obtain $\rho (z^n, q)=\iota (q)$ for every $(z^n, q)\in \langle z\rangle \times Q$. Hence
\begin{equation*}
C_N(z)=C_G(z)\cap N=C_G(z)\cap {\rm Ker }\, \rho = \langle z\rangle \times \{e\} .
\end{equation*}
Therefore $C_G(z)/C_N(z)\cong Q$.
\end{proof}

\end{document}